\newtheorem{theorem}{Theorem}
\begin{document}

\title[The 5-Engel identity in $B(2,4)$]%
{Proving the 5-Engel identity in the \\ 2-generator group of exponent four}

\author[C. Ramsay]{Colin Ramsay}

\address{School of Electrical Engineering and Computer Science,
The University of Queensland, Queensland 4072, Australia}

\email{uqcramsa@uq.edu.au}

\date{\today}

\keywords{Fifth Engel word, Group of exponent four, Product of fourth powers}

\subjclass[2010]{Primary: 20F10; Secondary: 20F45, 20-08}

\begin{abstract}
It is known that the fifth Engel word $E_5$ is trivial in the 2-generator group
  of exponent four $B(2,4)$, and so can be written as a product of fourth 
  powers.
Explicit products of 250 and 28 powers are known, using fourth powers of words 
  up to lengths four and ten respectively.
Using a reduction technique based on the recursive enumerability of the set of 
  trivial words in a finite presentation we were able to rewrite $E_5$ as a 
  product of 26 fourth powers of words up to length five.
\end{abstract}

\maketitle

\section{Introduction}

If $x$ and $y$ are elements of a group $G$, then the commutator 
  $[x,y] = x^{-1}y^{-1}xy$.
The commutator is also known as the first Engel word $E_1$.
The $n$th Engel word $E_n$, $n>1$, is defined as $[E_{n-1},y]$.
$G$ it is said to satisfy the $n$-Engel identity if $E_n$ is trivial in $G$.

Burnside \cite{Bur} introduced what are now called the free Burnside groups
  of exponent $n$ on $d$ generators $B(d,n)$.
The group $B(2,4)$ is finite, with order $2^{12}$, and satisfies the
  5-Engel identity.
So in a free group, and hence any group, $E_5$ can be expressed as a 
  product of fourth powers.
An explicit expression for $E_5$ was first given in \cite{Hav}, and contained
  250 fourth powers.
A shorter expression, with 28 fourth powers, was given in \cite{Kor}.

Given a finite presentation $P = \langle S | R \rangle$ for a group $G$ it 
  is, in general, an unsolvable problem to decide whether or not a given element
  $g \in G$ is trivial.
As $S$ and $R$ are finite, the set of trivial words in $G$ is recursively 
  enumerable.
(This is also true if $S$ is finite and $R$ itself is recursively enumerable, 
  but we consider only finite presentations here.)

In Section~\ref{sec2} we describe a simple method of generating trivial words
  in $P$, and prove that this enumerates the set of all trivial words.
This engenders a reduction technique for generating proofs that a particular
   word in the free group generated by $S$ is trivial in $G$.
Using this, we were able to generate a new shortest proof that $E_5$ is
  trivial in $B(2,4)$.
Our proof uses 26 fourth powers (instead of 28), and utilises words up to length
  five (instead of ten).
We discuss this proof, and the proofs given in \cite{Hav,Kor}, in 
  Section~\ref{sec3}. 

\section{Generating Proofs}\label{sec2}

Let $F$ be the free group with generating set $S$, and let $R$ be a set of words
  in $F$.
Let $R^\prime$ be the normal closure of the subgroup of $F$ generated by all
  words $r\in R$. 
The group $G$ presented by $\langle S | R \rangle$ is the quotient group 
  $F/R^\prime$. 
The elements of $G$ are the cosets of $R^\prime$, with the words in $R^\prime$
  being the words in $F$ which are trivial in $G$.
So a non-empty freely reduced  word $w$ in $F$ is trivial in $G$ if and 
  only if it can be written as a product of conjugates of relators
\begin{equation}\label{eqn1}
  w = \prod^N_{i=1} \, u_i^{-1} \, r_i^{\pm1} \, u_i ,
\end{equation}
  where $r_i \in R$ and the $u_i$ are words in $F$.

The decomposition of $w$ on the right hand side of (\ref{eqn1}) is a proof that 
  $w$ is trivial in $G$, and is called a \emph{proof word}.
It freely reduces to $w$ and if the $r_i$ are excised (being 
  trivial in $G$) the remainder freely reduces to the empty word $\varepsilon$,
  and thus $w$ is trivial in $G$.

The proof word in (\ref{eqn1}) is in the form of individually conjugated
  relators.
There is typically significant free cancellation in the words
  $u_{i-1} u_{i}^{-1}$, $2 \leqslant i \leqslant N$, and we normally
  work with proof words where these words, and $u_1^{-1}$ and $u_N$, are freely 
  reduced.
We call these \emph{reduced proof words}, and note that to enumerate all the 
  trivial words in $G$ it suffices to enumerate only the reduced proof words.

One way to construct reduced proof words is to start with some relator $r \in R$
  and then perform a series of moves, where a move is conjugation by an element
  of $S \cup S^{-1}$ or appending a relator.
This construction can clearly generate a list of reduced proof words --
  all 1-move proofs, all 2-move proofs, etc.\ 
  -- but it is not obvious that it enumerates all trivial words.
  
\begin{theorem}\label{thm1}
Any reduced proof word $U$ can be generated from $\varepsilon$ by a series of
  relator append and conjugation moves.
\end{theorem}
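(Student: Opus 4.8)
The plan is to make the structure of a reduced proof word completely explicit and then exhibit, for an arbitrary one, a sequence of moves producing it. First I would write a reduced proof word $U$ in the \emph{gap form}
\[
  U = g_0 \, r_1^{\pm1} \, g_1 \, r_2^{\pm1} \, g_2 \cdots r_N^{\pm1} \, g_N ,
\]
where the relators $r_i \in R$ are kept intact and each gap $g_i$ is a freely reduced word. Comparing with (\ref{eqn1}) we have $g_0 = u_1^{-1}$, $g_i = u_i u_{i+1}^{-1}$ for $1 \leqslant i \leqslant N-1$, and $g_N = u_N$, these being exactly the words required to be freely reduced in the definition of a reduced proof word. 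The crucial feature I would record at the outset is the constraint obtained by excising the relators: since the remainder $g_0 g_1 \cdots g_N$ freely reduces to $\varepsilon$, we have $g_0 = (g_1 g_2 \cdots g_N)^{-1}$ in $F$ (this is just the telescoping $u_1^{-1}(u_1 u_2^{-1}) \cdots (u_{N-1} u_N^{-1}) u_N = \varepsilon$).

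Next I would pin down the effect of each move on this gap form. Appending a relator $r^{\pm1}$ at the right turns the current final gap $g_N$ into an internal gap and introduces a fresh empty final gap after the new relator, leaving $g_0$ and all earlier gaps untouched. Conjugating by $s \in S \cup S^{-1}$ sends $U$ to $s^{-1} U s$, which after free reduction replaces $g_0$ by $s^{-1} g_0$ and $g_N$ by $g_N s$ while fixing every relator and every internal gap. With these two facts the construction is essentially forced: starting from $\varepsilon$, I would append $r_1^{\pm1}$, then conjugate successively by the letters of $g_1$, then append $r_2^{\pm1}$, conjugate by the letters of $g_2$, and so on, finishing with append $r_N^{\pm1}$ followed by conjugation through the letters of $g_N$.

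I would then verify by induction on $i$ that immediately after the append of $r_{i+1}^{\pm1}$ the internal gap sitting between $r_i$ and $r_{i+1}$ equals $g_i$ (it was built up in the final slot starting from $\varepsilon$, and $g_i$ is freely reduced, so no spurious cancellation occurs), and that it is never disturbed thereafter; the final gap is likewise $g_N$. The one quantity that is not set locally is the leading gap $g_0$: every conjugation prepends the inverse of its letter to $g_0$, so after the whole sequence $g_0$ has become the reduced form of $(g_1 g_2 \cdots g_N)^{-1}$.

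The main obstacle is precisely this global behaviour of conjugation: a conjugation move cannot adjust an internal gap in isolation, and it alters the leading gap cumulatively throughout the construction, so one cannot simply set each $u_i$ in turn. The resolution is the excision constraint recorded in the first step: because $(g_1 \cdots g_N)^{-1}$ is forced to equal $g_0$ for any genuine reduced proof word, the accumulated leading gap automatically matches the required one, and the move sequence reproduces $U$ exactly. The degenerate case $N = 0$ (the word $\varepsilon$) is generated by the empty sequence of moves, which also serves as the base of the induction.
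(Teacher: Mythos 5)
Your proof is correct and follows essentially the same route as the paper's: both produce $U$ by the move sequence (append $r_1^{\pm1}$, conjugate through the letters of the gap following $r_1$, append $r_2^{\pm1}$, and so on) and both rely on the fact that the product of the freely reduced gaps reduces to $\varepsilon$, which forces the accumulated leading conjugation string to equal $u_1^{-1}$. Your version merely makes the bookkeeping (the ``gap form'' and the induction on the internal gaps) more explicit than the paper's computation with the product $V^{-1}U$.
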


\begin{proof}
Let $v_1$, $v_i$ ($2 \leqslant i \leqslant N$) and $v_{N+1}$ be the freely 
  reduced forms of the words $u_1^{-1}$, $u_{i-1} u_{i}^{-1}$ and $u_N$ in
  (\ref{eqn1}) respectively.
Let $V$ be the product $v_1v_2\cdots v_{N+1}$.
Note that $V$ freely reduces to $\varepsilon$, but we do not freely 
  reduce it at this stage.
Now consider the product $V^{-1}U$, which equals
\begin{equation}\label{eqn2}
  v_{N+1}^{-1} \cdots v_2^{-1}v_1^{-1} \; 
    v_1 \,r_1^{\pm1}\, v_2 \,r_2^{\pm1} \cdots v_N \,r_N^{\pm1}\, v_{N+1} .
\end{equation}
If the subword $v_1^{-1}v_1$ is cancelled, then the remaining word can
  be generated by a series of moves:\ start with $r_1^{\pm1}$, conjugate by 
  $v_2$, append $r_2^{\pm1}$, etc.
Since $v_1^{-1}$ has been cancelled from $V^{-1}$, the remainder
  $v_{N+1}^{-1} \cdots v_2^{-1}$ must freely reduce to $v_1$.
So we have generated the reduced proof word $U$, as required.
\end{proof}

If a freely reduced word $T \in F$, called the \emph{target} word, is trivial 
  in $G$ then there exist reduced proof words proving this.
We can, in principle, find these by enumerating the trivial words and checking 
  them to see if they equal $T$.
(In fact, any conjugate of $T$ or $T^{-1}$ would suffice.)
If we only require one reduced proof word, say $U$, we can focus our
  search by attempting to reduce $T$ to $\varepsilon$.
Observe that the word $T^{-1}U$ must freely reduce to $\varepsilon$, let $V$ be
  as in the proof of Theorem~\ref{thm1}, and
  consider the product $T^{-1} U V^{-1}$, which equals
\begin{equation}\label{eqn3}
  T^{-1} \; 
  v_1 \,r_1^{\pm1}\, v_2 \,r_2^{\pm1} \cdots v_N \,r_N^{\pm1}\, v_{N+1} \;
  v_{N+1}^{-1}v_N^{-1} \cdots v_1^{-1} . 
\end{equation}

If we now cancel the subword $v_{N+1} v_{N+1}^{-1}$ and conjugate, we obtain
  the expression
\begin{equation}\label{eqn4}
  v_N^{-1} \cdots v_1^{-1} \;
  T^{-1} \; 
  v_1 \,r_1^{\pm1}\, v_2 \,r_2^{\pm1} \cdots v_N \,r_N^{\pm1} .
 \end{equation}
This word can be generated by starting with $T^{-1}$ and then making a series
  of conjugation and append moves.
If the word (\ref{eqn4}) freely reduces to $\varepsilon$ then we have generated 
  a proof that $T$ is trivial in $G$.
Since any conjugate of $\varepsilon$ is trivial, the reduced proof word $U$ can
  be recovered by conjugation, noting that $v_N^{-1} \cdots v_1^{-1}$ freely 
  reduces to $v_{N+1}$.

In the sequel we consider the case of proving that $E_5$ can be
   written as a product of fourth powers in a 2-generator group.
So we fix $S=\{a,b\}$, and adopt the case inversion convention 
  that $A=a^{-1}$ and $B=b^{-1}$.
Our relators are fourth powers of freely and cyclically reduced words (called 
  \emph{base words}), and are finite in number.
We assume that the set of relators has been symmetrized; that is, it includes
  all inverses and rotations of the words in $R$.
If the 5th Engel word $[[[[[a,b],b],b],b],b]$ is expanded out and freely reduced, 
  then a word of length 72 is obtained.
This word is of the form $BBBB \cdots bbbb$, so we use as our target word the
  cyclically reduced version, of length 64, and conjugate the resultant proof
  words by $bbbb$ (which is not treated as a relator).

Our base words are drawn from lists of bracelets on the generators $S$.
Bracelets are equivalence classes of strings under rotations and reversals.
As we are working within a free group (our alphabet is $\{a,A,b,B\}$), we use 
  the inverse of a word instead of its reversal and require our bracelets to be 
  freely and cyclically reduced.
We call the resulting bracelets \emph{reduced bracelets}, and modifying an  
  extant bracelet enumeration algorithm to generate these is described in 
  \cite{Ram}.
The maximum length of the base words in the proof in \cite{Kor} is ten, and in
  Table~\ref{tab2} we list the counts of reduced bracelets on two generators up
  to length ten.
We also include the counts of the Lyndon words, which are the reduced bracelets 
  which are not proper powers.

\begin{table}[t]
\caption{Base word counts (2 generators)}\label{tab2}
\begin{tabular*}{0.8\textwidth}{@{\extracolsep{\fill}}lrrrrrrrrrr}
\ word length & 1 & 2 & 3 & 4 & 5 & 6 & 7 & 8 & 9 & 10\\
\hline
\ reduced bracelets & 2 & 4 & 6 & 13 & 26 & 66 & 158 & 418 & 1098 & 2968 \\
\ Lyndon words      & 2 & 2 & 4 &  9 & 24 & 58 & 156 & 405 & 1092 & 2940 \\
\end{tabular*}
\end{table}
 
\section{Results}\label{sec3}

Havas' proof in \cite{Hav} was derived from the workings of coset enumerations 
  using a presentation for $B(2,4)$ with ten relators, using base words with 
  lengths from one to four.
The presentation was a nine relator presentation for $B(2,4)$ plus an
  extra relator to improve the performance of the coset enumerations.
A range of proofs was obtained by varying the settings of the enumerations, with
  the shortest proof word found having 250 fourth powers.
Note that coset enumeration only uses the relators given in the presentation, 
  so powers not in the presentation do not appear in the proofs produced.

Korlyukov's proof in \cite{Kor} was obtained using a combination of computer and
  hand calculations.
He made use of the recursive nature of the definition of $E_n$ to 
  rewrite $E_5$ in a shortened form using the subword $a^{-1}b^{-1}aba^{-1}ba$.
It is copies of this word or its inverse, combined with $b^n$ for some 
  $n \in \{\pm1, \pm2, \pm3\}$, which yield the 
  base words of lengths eight, nine and ten in his proof word.
The proof proceeds by the judicious introduction of freely trivial words which
  yield fourth powers in the rewritten word.
This was continued until $E_5$ had been rewritten as an element of the 
  normal closure, yielding a proof word  with 28 fourth powers.
Observe that there is no initial presentation.
However an ex post presentation is implicit in the proof word, and is 
  simply the set of distinct relators therein.

To facilitate proof verification and the comparison of proofs,  we put all 
  proof words into a standardised reduced form.
The relators are written out in full and delimited by parentheses, with the 
  strings of conjugating elements outside the relators freely reduced.
The parentheses are not part of the proof, but allow us to distinguish between
   the conjugation and the relators.
If possible, matching generator plus inverse pairs which border a relator are 
  folded into the relator, reducing the length by two symbols and rotating the
  relator (for example, $a(babababa)A$ can be replaced by $(abababab)$).

\begin{table}[t]
\caption{Reduced proof word statistics}\label{tab1}
\begin{tabular*}{0.8\textwidth}{@{\extracolsep{\fill}}lrrrrr}
\ & GH & AVK & CR & \phantom{xxxx}1to4 & inf\\
\hline
\ overall length         & 3180 & 616  & 444  & 716  & 842 \\
\ count of relators      & 250  & 28   & 26   & 48   & 60 \\
\ sum of relator lengths & 1912 & 408  & 272  & 440  & 552 \\
\ mean base word length  & 1.91 & 3.64 & 2.62 & 2.29 & 2.30 \\
\ conjugating pairs      & 384  & 76   & 60   & 90   & 85 \\
\ pairs per relator      & 1.54 & 2.71 & 2.31 & 1.88 & 1.42 \\
\ distinct relators      & 10   & 12   & 13   & 11   & 12 \\
\ group order          & $2^{12}$ & $2^{24}$ & $2^{13}$ & $2^{13}$ & $\infty$ \\
\end{tabular*}
\end{table}

The statistics of all the proof words we discuss are given in Table~\ref{tab1}, 
  with the proofs from \cite{Hav} and \cite{Kor} given in the ``GH'' and ``AVK''
  columns respectively.
The overall length includes the parentheses.
Korlyukov's proof is much shorter than Havas' but has longer 
  base words and more conjugation per relator.
All ten of the relators in Havas' presentation appear in the proof word (this 
  need not be true in general), while Korlyukov's proof has twelve distinct 
  relators and these present a finite group of order $2^{24}$ 
  (that is, $|B(2,4)|^2$).

Using our reduction technique with random selections of base words from the 17
  Lyndon words up to length four, we were able to generate a proof word with 48
  powers.
We do not record this proof word here, but give its statistics in the ``1to4'' 
  column of Table~\ref{tab1}.
It is significantly shorter than Havas' original, and its eleven relators 
  present a finite group of order $2 | B(2,4) |$.
Repeating this procedure using the 41 Lyndon words up to length five, we were 
  able to generate the proof word given below.
\begin{center}\ttfamily
  BBBBAbabABa(bAbAbAbA)(aBaBaaBaBaaBaBaaBaBa)Ab(AbAAbAAbAAbA)aBaaBa \
  (bAAbAAbAAbAA)(aaBaaaBaaaBaaaBa)AbAAAb(AAAA)(abABBabABBabABBabABB) \
  (bbbb)BBaBAb(baBAbaBAbaBAbaBA)abABabA(BBBB)b(bbabAbbabAbbabAbbabA) \
  aBAB(BaBABaBABaBABaBA)abAbabA(bbbb)B(BBabABBabABBabABBabA)aBAbbaBA \
  (bbbb)(BBaBBaBBaBBa)A(bbbb)B(AAAA)b(AAAA)aa(aaBaaBaaBaaB)bA \
  (AbAbAbAb)BaB(aBAbaaBAbaaBAbaaBAba)ABabAAB(aaaa)A(AAbAAbAAbAAb) \
  B(aaaa)(ABABABAB)babaaBAbabABabAbaBABaBAbabABaBAbaBABabbbbb
\end{center}

This proof word has thirteen distinct relators, and these present a finite group
  of order $2 | B(2,4) |$.
Its statistics are given in the ``CR'' column of Table~\ref{tab1}.
It contains 26 fourth powers and uses base words of all lengths from one to 
  five.
Although 26 is only a slight improvement on Korlyukov's 28, the proof word is
  substantially shorter, due mainly to the shorter base words and the reduction 
  in the number of conjugating pairs.
Attempts to generate a shorter prof word by using longer Lyndon words (or 
  reduced bracelets) did not succeed.

The large group in Korlyukov's proof begs the question:\ 
  Can we find a set of base words whose fourth powers
  present an infinite group in which $E_5$ is trivial, and generate a  
  proof therefrom?
Augmenting the procedure used to generate the ``CR'' proof with a check to
  accept only infinite presentations, we were able to generate the required
  base words.
The statistics for an example proof word are given in the ``inf'' column of 
  Table~\ref{tab1}.

Our results demonstrate that, to generate short proofs, it is not necessary to 
  use base words up to length ten or to include base words that contain `long'
   subwords of $E_5$.
Nor is it necessary for the group presented to be $B(2,4)$, or even finite.
The most important factors seem to be the selection of the base words and the 
  inclusion of redundant relators 
  (that is, relators which are derivable from the other relators).
For example, in the presentations implicit in the ``GH'', ``AVK'' and ``CR'' 
  proof words, the 10, 12 and 13 distinct relators (resp.) can be reduced to 
  9, 8 and 8 (resp.) without altering the group presented.

\end{document}